\documentclass[11pt]{amsart}
\usepackage{amscd,amssymb,graphics}

\usepackage{amsfonts}
\usepackage{amsmath}
\usepackage{amsxtra}
\usepackage{latexsym}
\usepackage[mathcal]{eucal}

\usepackage{graphics,colortbl}

\usepackage[final]{pdfpages}

\input xy
\xyoption{all}
\usepackage{epsfig}
\usepackage[hidelinks]{hyperref} %,backref

\oddsidemargin 0.1875 in \evensidemargin 0.1875in
\textwidth 6in % Width of text line.
\textheight 230mm \voffset=-4mm

%\textwidth 450 pt \textheight 625 pt \topmargin0 pt \oddsidemargin
%0 pt \evensidemargin 0 pt \marginparwidth 42 pt

%%%%%%%%%%%%%%%% Theorems
\newtheorem{thm}{Theorem}[section]
\newtheorem{cor}[thm]{Corollary}

\newtheorem{prop}[thm]{Proposition}

\theoremstyle{definition}

\theoremstyle{remark}

\numberwithin{equation}{section}

\numberwithin{equation}{section}

%%%%%%%%%%
\newcommand{\delete}[1]{} % Comment out text.

\newcommand{\ben}{\begin{enumerate}}
	
	\newcommand{\een}{\end{enumerate}}
\newcommand{\bit}{\begin{itemize}}
	
	\newcommand{\eit}{\end{itemize}}
\newcommand{\rest}{\upharpoonright}
%%% %%%%

\def\ep{{\varepsilon}}
\def\al{\alpha}

\newcommand{\Del}{\Delta}

%%

%%

%\def\Tame{\operatorname{Tame}}

%\def\CO{$\operatorname{CO}$}
%\def\LO{$\operatorname{LO}$}
%\def\Sturm{\operatorname{Sturm}}
%\def\WRN{\operatorname{WRN}}

%%%%%%%%%%%
\newcommand{\card}{\rm{card\,}}
%%%%%%%%%%%

%\def\s{{\mathbb S}}

\def\Z {{\mathbb Z}}

\def\T {{\mathbb T}}

%FROM %%%%%%%%%%%%%%%%%%%%%%%%%%%%%%%%%%%%%%%%%%%%

%%

\newcommand{\br}{\vspace{4 mm}}

\def\Aut{{\mathrm Aut}\,}

\def\Homeo{{\mathrm{Homeo}}\,}

\def\QED{\nobreak\quad\ifmmode\roman{Q.E.D.}\else{\rm Q.E.D.}\fi}

%    Blank box placeholder for figures (to avoid requiring any
%    particular graphics capabilities for printing this document).

\begin{document}

\title[]{Short proofs of theorems of Malyutin and Margulis}

\author{Eli Glasner}

\address{Department of Mathematics\\
     Tel Aviv University\\
         Tel Aviv\\
         Israel}
\email{glasner@math.tau.ac.il}

\subjclass{54H20, 37B05, 20B07}

\begin{date}
{April, 2016}
\end{date}

\begin{abstract}
The Ghys-Margulis alternative asserts that a
subgroup $G$ of homeomorphisms of the circle which does not contain a free subgroup on two generators must admit an invariant probability measure. 
Malyutin's theorem classifies minimal actions of $G$.
We present a short proof of Malyutin's theorem and then deduce Margulis' theorem which confirms the G-M alternative.
The basic ideas are borrowed from the original work of Malyutin but the use of the apparatus of the enveloping semigroup enables us to shorten the proof considerably.

\end{abstract}

\maketitle

Our goal in this work is to present a short proof of a neat theorem of Malyutin \cite{Mal}
(Theorem \ref{Mal} below). 
The basic ideas are borrowed from the original work of Malyutin but the use of
the apparatus of the enveloping semigroup enables us to shorten the proof considerably.
As was shown by Malyutin the result known as the Ghys-Margulis alternative 
\cite{Mar} follows, with a little extra work, from his theorem. 
After recalling some background we prove Malyutin's theorem in the first section,
and in the second we provide complete, and simplified, arguments leading to Margulis' theorem.
For more information on the theory of topological dynamical systems and their enveloping 
semigroups see e.g. \cite{E}, \cite{Gl}.

\br

\section*{Some background}
Let $G$ be an infinite countable group. 
An action of $G$ 
on a compact Hausdorff space $X$ is given by a
homomorphism $\rho : G \to \Homeo(X)$. 
We usually omit to mention $\rho$ and write $gx$ instead of
$\rho(g)x$, ($x \in X$) and denote the resulting {\em dynamical
system (or $G$-space)} as $(X,G)$.
Such a system is {\em minimal} if every orbit $Gx$ is dense.
A point $x \in X$ in called an {\em almost periodic} point (or a {\em minimal} point)
if its orbit closure $\overline{Gx}$ is minimal.
%and it is {\em isometric} if the closure of 
%$\{\phi{g} : g \in G\} $ in $\Homeo(X)$ equipped with the uniform
%convergence topology, is precompact.

A pair $(x, x') \in X \times X$ is called {\em proximal}
if for every neighborhood $V$ in $X \times X$ of the diagonal 
$\Del_X =\{(z, z) : z \in X\}$,
there is $g \in G$ with $(gx, gx') \in V$.
For every system $(X,G)$ the set
$P = \{(x, x') \in X \times X :  x, x' \ \text{are proximal}\}$
is called the {\em proximal relation} on $X$. It is clearly
reflexive, symmetric and $G$-invariant. 
The system $(X,G)$ is {\em proximal} if  $P = X \times X$.
A subset $A \subset X$ is {\em contractible} if there is a point $z \in X$
such that for every neighborhood $U$ of $z$ there is $g \in G$ with
$g(A) \subset U$.
We say that $(X,G)$ is {\em extremely proximal} if every
proper closed subset of $X$ is contractible \cite{Gl}. When $(X,G)$ is minimal 
this is the same as requiring that
for every closed subset $A \subsetneq X$ and every nonempty open 
subset $U \subset X$ there exists an element $g \in G$
with $g(A) \subset U$.
%Recall that the $G$-action on a metrizable $X$ is equicontinuous iff
%there is a compatible metric on $G$ with respect to which every 
%$g \in G$ acts as an isometry. We then say that the action is 
%{\em isometric}.

In the sequel we will need the following simple well known result (\cite{Gl}, Proposition 3.5):

\begin{prop}\label{pingpong}
 If a group $T$ admits a non-trivial (i.e. having more than two points) extremely proximal
action $(X,T)$, then $T$ contains a free subgroup on two generators.
\end{prop}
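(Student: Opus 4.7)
The strategy is a direct application of the classical ping-pong lemma. It suffices to exhibit $g_1,g_2\in T$ and four pairwise disjoint nonempty open sets $U_1^+,U_1^-,U_2^+,U_2^-$ in $X$ such that, for each $i\in\{1,2\}$ and every $n\geq 1$, one has $g_i^n(X\setminus U_i^-)\subset U_i^+$ and $g_i^{-n}(X\setminus U_i^+)\subset U_i^-$; the ping-pong lemma then yields that $g_1$ and $g_2$ freely generate a rank-two subgroup of $T$.

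A preliminary step rules out finite $X$. If $X$ were finite (hence discrete Hausdorff) with $|X|\geq 3$, any two-point proper closed subset $A$ would have to be contractible into a singleton open neighborhood, contradicting the injectivity of the contracting homeomorphism. Hence $X$ is infinite, and I may select four distinct points $p_1^+,p_1^-,p_2^+,p_2^-\in X$ with pairwise disjoint open neighborhoods $U_1^+,U_1^-,U_2^+,U_2^-$.

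The heart of the argument is to produce, for each $i$, a single element $g_i\in T$ with $g_i(X\setminus U_i^-)\subset U_i^+$. Given such a $g_i$, the full ping-pong package is automatic: since $U_i^+\subset X\setminus U_i^-$ by disjointness, an easy induction extends the inclusion to all positive powers $g_i^n$, and passing to images of complements yields $g_i^{-n}(X\setminus U_i^+)\subset U_i^-$ for $n\geq 1$. Pairwise disjointness of the four neighborhoods then places each $U_j^\pm$ with $j\neq i$ inside both $X\setminus U_i^-$ and $X\setminus U_i^+$, which is exactly the ping-pong configuration.

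The main obstacle, and the subtle point of the argument, is the existence of the $g_i$. Extreme proximality as defined supplies, for each proper closed set $A$, only a specific attractor point $z_A$ into whose neighborhoods $A$ can be contracted, and $z_A$ need not lie in the prescribed target $U_i^+$. I would resolve this by first verifying that a nontrivial extremely proximal system is necessarily minimal: any minimal subset is $T$-invariant and therefore can only be contracted to itself, forcing it to collapse to a single fixed point, and a further application of contractibility to proper closed sets containing such a would-be fixed point rules the fixed point out whenever $|X|\geq 3$. With minimality in hand, the equivalent formulation quoted in the excerpt applies, allowing one to prescribe the target open set of the contraction freely; taking this target to be $U_i^+$ and the closed set to be $X\setminus U_i^-$ produces the desired $g_i$ and completes the proof.
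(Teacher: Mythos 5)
Your overall strategy coincides with the paper's: both arguments are ping-pong constructions whose only dynamical input is a pair of elements, each contracting the complement of one small open set into a second, disjoint one. (The paper's variant is slightly slicker: since the contracting elements already map $X\setminus U_1$ into $U_2$ in a single step, one tests an arbitrary reduced word on a point outside $U\cup V$ directly, with no induction on powers.) You are also right to flag the discrepancy between the definition of contractibility as given (contraction into neighborhoods of \emph{some} unspecified point) and the stronger property needed for ping-pong (contraction into an \emph{arbitrarily prescribed} open set); the paper uses the stronger form in its proof, having justified the equivalence only for minimal systems.

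The genuine gap is in your proposed bridge, namely the claim that a nontrivial extremely proximal system in the weak sense is automatically minimal. The first half of your argument is fine: a minimal subset $M\subsetneq X$ is invariant, so contracting it forces $M=\{z\}$ with $z$ a fixed point. But the second half does not close: a proper closed set $A$ containing the fixed point $z$ satisfies $z\in g(A)$ for every $g$, so $A$ simply contracts to $z$ itself, and no contradiction arises. In fact the implication is false. Realize $X=\T$ as the one-point compactification $\R\cup\{\infty\}$ and let $T=\mathrm{Homeo}^+(\R)$ act on $X$ with $\infty$ fixed. A proper closed $A\not\ni\infty$ is a compact subset of $\R$ and can be contracted anywhere; a proper closed $A\ni\infty$ misses some bounded interval $(a,b)$, and an increasing homeomorphism with $g(a)<-M$ and $g(b)>M$ pushes $A$ into $X\setminus[-M,M]$, an arbitrary basic neighborhood of $\infty$. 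Thus every proper closed set is contractible, $X$ is infinite, yet the action is not minimal. The correct repair is not to prove minimality but to assume it (harmless, since in the application to Theorem 2.1 the extremely proximal factor is minimal), or equivalently to take the strong contraction property as the working definition, as the paper's own proof effectively does. With that hypothesis restored, the rest of your argument --- ruling out finite $X$, the induction on powers, and the standard ping-pong lemma, for which you should also reserve a fifth point outside your four neighborhoods on which to test reduced words --- goes through.
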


\begin{proof}
Let $U$ and $V$ be disjoint non-empty open subsets in $X$ such that
$U \cup V \not= X$. 
Let $U_1 , U_2$ and $V_1 , V_2$ be disjoint non-empty subsets
in $U$ and $V$ respectively. Then there exists $s$ and $t$ in $T$ such that
$t(X\setminus U_1) \subset U_2$ and $s(X\setminus V_1) \subset V_2$. 
Hence we also have $t^{-1}(X\setminus U_2) \subset U_1$
and $s^{-1}(X\setminus V_2) \subset V_1$. 
Let $S$ be the subgroup of $T$ generated by $s$
and $t$ and let $x \in X \setminus (U \cup V)$. 
If $w$ is a reduced word in $t^{\pm 1}$ and
$s^{\pm1}$ then it is easy to see that $wx \in U_1 \cup U_2 \cup  V_1 \cup V_2$. 
Thus $wx \not= x$ and $w  \not = id$; i.e. $S$ is a free group. 
%\includepdf{pingpong.pdf}
\end{proof}

Let $E = E(X,G)$ be the enveloping semigroup of the system
$(X,G)$ and fix a minimal left ideal $I \subset E$.
We denote by $J \subset I$ the (nonempty) set of idempotents.
%It turns out that the family of sets $vX$ ($v \in J$) coincides with the family
%of maximal almost periodic sets.   
We fix some distinguished element $u \in J$. We let 
$\mathfrak{G} = uI = \{p \in I : up =p\}$; a subgroup of the semigroup $I$.
We also choose a distinguished point $x_0 \in X$,
with $ux_0 = x_0$. The evaluation map
$\eta : (E, G) \to (X, G),\ \eta(p) = px_0$ is a homomorphism
of dynamical systems.  
For each $\al \in \mathfrak{G}$ the map $R_\al : p \mapsto p\al$
defines an automorphism of the minimal dynamical system $(I,G)$
and, in fact, the collection $\{R_\al : \al \in \mathfrak{G}\}$ 
coincides with the group $\Aut(I,G)$ of automorphisms of the system $(I,G)$.
For each $v \in J$ the set $vX = v\mathfrak{G}x_0$ is a {\em maximal almost 
periodic set}, that is, for every finite subset of $n$ distinct points
$\{x_1, x_2,\dots, x_n\} \subset vX$, the point
$(x_1, x_2,\dots, x_n)$ is an almost periodic point of the 
$n$-fold product system $X \times X \cdots \times X$.
In particular, no pair $(x_i, x_j)$ with $i \not= j$ is proximal.
A minimal system $(X,G)$ is called {\em regular} if for any pair of points
$x, x' \in X$ there is an automorphism $\phi \in \Aut(X,G)$
such that $(\phi(x), x') \in P$ \cite{Au}. It is well known and not hard to check
that a minimal system $(X,G)$ is regular iff the map $\eta : I \to X$ is
an isomorphism of dynamical systems.
Note that every minimal proximal system is regular,
and that if $(X,G)$ is regular with the group $\Aut(X,G)$ finite, or even
compact (in the topology of uniform convergence), then
the quotient system $X/ \Aut(X,G)$ is minimal and proximal.

\section{Malyutin's theorem}

We now state and prove Malyutin's theorem:

\begin{thm}\label{Mal}
Every minimal system $(X, G)$, a continuous action of $G$ on the circle $X = \mathbb{T} = \T/\Z$, 
is either equicontinuous or it is a finite to one extension of an extremely proximal action
of $G$ on $\T$, 
where the factor map is a group extension, hence a covering map.
\end{thm}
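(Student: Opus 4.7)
The plan is to argue by dichotomy: if the action is equicontinuous we are done, and in the non-equicontinuous case I will peel off a finite group of symmetries to reveal an extremely proximal quotient on the circle.

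Assume $(X,G)$ is not equicontinuous. Equivalently, there is an idempotent $u \in J$ with $uX \subsetneq X$, and in particular the proximal relation $P$ is non-trivial. The first step is to control the automorphism group $H := \Aut(X,G)$. By minimality, any $\phi \in H$ fixing a point is the identity (its values on the dense orbit $Gx_0$ are then forced), so $H$ acts freely on $\T$. For a minimal system $H$ is always equicontinuous, so the closure $\bar H$ of $H$ in $\Homeo(\T)$ (uniform topology) is a compact group still acting freely on the circle. A compact group of homeomorphisms of $\T$ acting freely is, up to topological conjugacy, either a finite cyclic rotation group or the full rotation group $SO(2)$. The second possibility would force each $g \in G$, commuting with $\bar H = SO(2)$, to itself be a rotation, contradicting non-equicontinuity. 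Hence $H$ is finite cyclic, and after conjugating by a homeomorphism of $\T$ we may assume it acts by rotations.

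Next I would form the factor $\pi : X \to Y := X/H$. Because $H$ is a finite rotation group, $Y \cong \T$ and $\pi$ is a finite covering, which is precisely a group extension with structure group $H$ as required by the theorem. The induced action $(Y,G)$ is minimal and inherits non-equicontinuity from $(X,G)$. I would then invoke the structural fact recalled in the background: since $H$ exhausts the automorphism group of $(X,G)$, the quotient $(Y,G)$ has trivial automorphism group and, via the regularity-type argument quoted there, is proximal.

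The final and main step is to upgrade proximality to extreme proximality of $(Y,G)$. Pick a non-identity idempotent $v$ in the enveloping semigroup of $(Y,G)$; then $vY \subsetneq Y$, and because $Y = \T$ the complement $Y \setminus vY$ contains an open arc, giving a genuine geometric contraction. Given any proper closed $A \subsetneq Y$ and any nonempty open $U \subset Y$, the plan is to combine this contraction with minimality: first choose $g_1 \in G$ so that $g_1 A$ avoids a sizeable open arc, then approximate $v$ by a net in $G$ to crush $g_1 A$ into an arbitrarily small set, and finally use minimality of $(Y,G)$ to translate that small image inside $U$. I expect this last step — converting the abstract enveloping-semigroup contraction into a uniform geometric compression of arbitrary proper closed subsets of $\T$ — to be the principal obstacle, relying crucially on one-dimensionality of $\T$ and the triviality of $\Aut(Y,G)$ to rule out obstructions to moving arbitrary closed sets into arbitrarily small arcs.
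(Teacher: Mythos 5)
Your overall architecture (quotient by the automorphism group, show the quotient is proximal, upgrade to extremely proximal) matches the paper's, but several of your steps rest on claims that are either false in general or beg the question. First, ``for a minimal system $H=\Aut(X,G)$ is always equicontinuous'' is not true: for a minimal $\Z$-action the powers of the generating homeomorphism are automorphisms, and they form an equicontinuous family only when the system itself is equicontinuous. Finiteness of $\Aut(X,G)$ has to be earned, and the paper does this through the enveloping semigroup: every maximal almost periodic set $vX$ is finite (an infinite distal set would have two of its points pushed into a single contractible arc), hence the group $\mathfrak{G}$ is finite, and $\Aut(X,G)$ is a quotient of a subgroup of $\mathfrak{G}$. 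Second, quotienting once by $\Aut(X,G)$ need not kill the automorphism group of the quotient; the paper iterates the quotient and uses finiteness of $\mathfrak{G}$ to guarantee termination. This is harmless for the statement, since a composition of finite covers of $\T$ is again a cyclic group extension, but your single-step claim is unjustified. Your opening reduction (``not equicontinuous iff some $uX\subsetneq X$'') also silently uses the circle-specific fact that distal implies equicontinuous, which requires the arc-shrinking argument of the paper's Step 1.

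The decisive gap is the passage from ``trivial automorphism group'' to ``proximal.'' The background fact you invoke says that a \emph{regular} system with compact automorphism group has a proximal quotient; triviality of $\Aut(Y,G)$ does not by itself give regularity or proximality, and establishing this implication is the heart of Malyutin's argument. The paper first shows the proximal relation $P$ is open (a contractible arc plus minimality), so each proximal cell $P[x]$ is an open arc; it then defines $\tau(x)$ to be the positive endpoint of $P[x]$ and checks that $\tau$ is order-preserving, equivariant, continuous and injective, so that either $\tau=\mathrm{id}$ (and the system is proximal) or $\tau$ is a nontrivial automorphism, contradicting the triviality of $\Aut(Y,G)$. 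Nothing in your proposal substitutes for this construction. Finally, your proximal-to-extremely-proximal step is only a plan, and its stated strategy (contracting one arc and translating) does not obviously handle an arbitrary proper closed set; the paper's version contracts $n$ equally spaced points simultaneously, observes that not all of the resulting small arcs can be contracted by the same sequence so some large arc is contractible, and then uses compactness and minimality to conclude.
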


\begin{proof}
{\bf Step 1:}
%If $(X,G)$ is distal then it is equicontinuous:}
We will show first that if the action is distal then it is equicontinuous.
It suffices to show that for sequences $x_i \to x$ and $g_i \in G$
with $ g_i x_i \to y$ and $g_i x \to x$, these assumptions imply that 
$x =y$.
Now the sequence of shrinking arcs whose end points are $\{x_i, x\}$
are mapped by the $g_i$'s onto a sequence of arcs with end points
$\{g_ix_i, g_ix\}$ and, if $x \not=y$, these converge to a
proper interval $K$ with end points $\{x, y\}$. But then, for any closed
proper subarc $L$, with end points $\{a, b\}$,  $a \not=b$
and $\{a,b\} \cap \{x,y\} =\emptyset$, the sequence of arcs
$g_i^{-1}(L)$ shrinks to the singleton $\{x\}$, contradicting
distality. Thus we must have $x =y$ and the equicontinuity is proved. 

\br

%So we now assume that there are distinct 
%
%\br

{\bf Step 2:}  
%$P \not = \Delta_X$ implies $P$ is open:}
So we now assume that there are distinct points $a, b$ in $\T$
which are proximal, i.e. for some sequence $g_i \in G$ and a
a point $z \in \T$ we have $g_ia \to z$ and $g_i b \to z$. Thus,
there is a proper closed arc $V$ whose end points are $a$ and $b$
such that $g_i(V) \to \{z\}$. In other words $V$ is contractible.

\br

{\bf Step 3:}  
We use the observation of the previous step to show that if $(X,G)$
is proximal then it is extremely proximal. To see this let, for each $n \ge 3$,
 $\{x_1, x_2, \dots, x_n\}$ be a sequence of $n$ points on $X$ equally spaced
 and ordered in the positive direction. By proximality there is a sequence $g_i \in G$
 and a point $z \in X$ such that $g_i x_k \to z$ for every $1 \le k \le n$
 (use induction).
 It follows that for each consecutive pair $x_k, x_{k+1}$ one of the two closed
 arcs defined by this pair is contractible to $z$ via the sequence $g_i$. However,
 it can not be the case that all the small arcs formed in this way are contractible under $g_i$.
We conclude that at least one large arc is contractible.
 
By compactness we conclude that there is a point $z \in \T$ with the property
that for every open small arc $V$ around $z$ and every closed subset $K \subset \T$
with $z \not\in K$ there is some $g \in G$ with $gK \subset V$.
As the same holds for every point in the $G$-orbit of $z$ the extreme proximaility is proven.

\br

{\bf Step 4:}
%The proximal relation
%$P \subset X \times X$ is an open set}
%The proximal relation $P$ is open
We next show that the proximal relation
$P \subset X \times X$ is an open set. 
%Indeed if $(x, x') \in P$
Let $(x, x') \in P$.
By Step 2 there is a contractible open arc $\emptyset \not= V \subset X$.
By minimality, fixing any point $w \in V$, for some sequence 
$h_n \in G$, $h_n x \to w$ and $h_n x' \to w$. 
Choosing  a sufficiently large
$n$ and sufficiently small open arcs $U$ and $U'$ around $x$ and $x'$
respectively we will have $h_n(U), h_n(U') \subset V$.
Finally, as $V$ can be shrunk to a point by a sequence $g_i$, we conclude
that $U \times U' \subset P$.

\br

{\bf Step 5:}
We claim that, for each $v \in J$, the maximal 
almost periodic set $vX$ is finite. In fact,
assuming the contrary, there is an infinite sequence
$\{x_n \} \subset vX$ and we can assume that it converges
to some point $x \in X$. By minimality
there is $g \in G$ with $gx \in V$, where $V$ is a proper
contractible arc, 
and then for all sufficiently large $n$
we have $gx_n \in V$. This however is impossible as every pair of
distinct points in $vX$ is distal.
It now follows that there is $k \ge 1$ such that for all
$v \in J$, $vX$ has exactly $k$ points (recall that
for every $v, w \in J$ we have $vw=v$, hence $vwX = vX$
and $wvX = wX$).

Moreover, as $uX = \mathfrak{G}x_0$, it follows that,
with $\mathfrak{H}=\mathfrak{G}(X,x_0) = \{\al \in \mathfrak{G} : \al x_0 = x_0\}$ 
(the Ellis group at $x_0$),  we have 
$\card \mathfrak{G}/\mathfrak{H} = k$. 
Since for every $\al \in \mathfrak{G}$, 
$ \mathfrak{G}(X,\al x_0) = \al \mathfrak{G}(X, x_0)\al^{-1}$,
we conclude that 
%every element of the normal subgroup $\mathfrak{N} \tri \mathfrak{G}$,
%defined by
%$\mathfrak{N} : = 
$\bigcap_{\al \in \mathfrak{G}}\al \mathfrak{H} \al^{-1}= \{id\}$, 
%acts on $uX$ as the identity.
whence that $\mathfrak{G}$ is a finite group. 
% of automorphisms.
Thus, the regular dynamical system $(I,G)$ is
a finite group extension of a proximal system. 
%%the group $G = uI$ has $k$ points,
%%say $G =\{\al_j : 0 \le j \le k-1\}$, and
%%the dynamical system $(I,G)$ is isomorphic to the 
%closure of the orbit of the point $(x_0, \al_1 x_0, \dots,
%\al_{k-1}x_0)$, in the $k$-fold product system
%$X \times X \times \cdots \times X$,
%where  $\{\al_0=id,  \al_2, \dots, \al_{k-1}\}$ is a set of representatives
%for the left $\mathfrak{H}$ cosets in $\mathfrak{G}$.

\br

{\bf Step 6:}
Note that if $\phi \in \Aut(X,G)$ is an automorphism of the
system $(X,G)$, then the pair $(x_0,\phi(x_0))$ is an almost
periodic point of the product system $(X \times X, G)$ and therefore there 
is an $\al \in \mathfrak{G}$ with $\phi(x_0) = \al x_0$.
Then, for the evaluation map $\eta : E(X,G) \to X$,  
$\eta(p) = px_0$, we have for every $p \in E$:
$$
(\phi \circ \eta)(p) =
\phi(\eta(p)) = \phi(px_0) = p\phi(x_0) = p\al x_0 = 
\eta(p\al) = (\eta \circ R_\al)(p). 
$$
Thus the set 
$$
\mathfrak{A} := \{\al \in \mathfrak{G} : \eta \circ R_\al 
= \phi \circ \eta \ {\text{for some}}\ \phi 
 \in \Aut(X,G)\}
 $$ 
 is a subgroup of $\mathfrak{G}$
 which is ``mapped" by $\eta$ onto $\Aut(X,G)$.
 It follows that the group $\Aut(X,G)$ is finite
 and, passing to the dynamical system $\tilde{X} =X/\Aut(X,G)
 \cong I / \mathfrak{A}$, we note that the quotient map
 $X \to \tilde{X}$ is a finite group extension, hence a covering map
with $\tilde{X}$ homeomorphic to $\T$.
We repeat this procedure with $\Aut(\tilde{X},G)$
 to obtain $\tilde{\tilde{X}} = \tilde{X}/ \Aut(\tilde{X},G)$ and so on.
As $\mathfrak{G}$ is finite we will eventually end up with a system
%for which the results obtained so far for $X$ are still valid 
%but 
whose automorphism group is trivial.
%
%for the
% new system and we now have one more assumption,
% namely that $\Aut(X,G)$ is trivial.
%
%
%  and that the system $(\tilde{X}, G)$
% has the property that $\Aut(\tilde{X}, G)$ is trivial.
 In order to keep the notation simple we now relabel
 this new system as $X$.
 Thus the results obtained so far are still valid for the
 new system and we now have one more assumption,
 namely that $\Aut(X,G)$ is trivial.

\br

{\bf Step 7:} 
We will finish the proof by showing that unless
$\eta : I \to X$ is one-to-one, $\Aut(X,G)$ necessarily contains
a non identity automorphism.

As we have seen (in Step 3) for each $x \in X$ the proximal cell 
$P[x] = \{x' : (x, x') \in P\}$ is an open arc containing $x$. We define
$\tau(x)$ to be the end point of this arc which goes in the positive direction.
Note that if for some $x \in X$ we have $\tau(x) =x$ then the system $(X,G)$
is proximal, hence regular, and we are done.
Otherwise $\tau(x) \not= x$ for every $x \in X$ and the pair $x, \tau(x)$ is {\bf not} proximal.
Next observe that it follows directly from the definition of $\tau$ that
$\tau(wgx) = g\tau(x)$ for every $x \in X$. This in turn implies that the image of 
$\tau$ is dense in $X$.
Also, as $x$ and $\tau(x)$ are not proximal, it is easy to check that
$\tau$ preserves the circular order. These two facts imply that $\tau$ is continuous, hence also surjective.
%new
Finally we show that $\tau$ is one-to-one. 
Suppose to the conterary that $\tau(x_1) =\tau(x_2)=z$
with $x_1 \not=x_2$.  It immediately follows that there is an open arc
$L$ which is mapped onto $z$. By minimality there is a finite
set $\{g_i\}_{i=1}^n \subset G$ with $X = \cup_{i=1}^n g_i L$, whence
$\tau{X} =  \{g_iz\}_{i=1}^n$. This however is impossible as 
we have seen that $\tau(X)=X$.
%is  necessarily a connected set. 
%Finally, again, the fact that $x$ and $\tau(x)$ are not proximal implies that $\tau$
%is one-to-one. 
Thus $\tau$ is an element of $\Aut(X,G)$ which is not the identity. 

Putting together our results so far, we see that, if the system $(X,G)$ is not equicontinuous,
then it is a $k$ to one extension, for some positive integer $k$,
of an extremely proximal system.
As the fundamental group of $\T$ is $\Z$ it follows that this
is a $\Z_k$ extension and our proof is complete. 
\end{proof}

\section{The Ghys-Margulis alternative} 
Following Malyutin we now retrieve a version of the Ghys-Margulis alternative
\cite{Gh}, \cite{Mar}.
 
\begin{thm}\label{GM}
Let $G$ be a countable subgroup of homeomorphisms of the circle $\T$ which act minimally
on $\T$. Then either $G$ is isomorphic to a subgroup of $\T$ and the action is via
translations, or $G$ contains a free group on two generators.
\end{thm}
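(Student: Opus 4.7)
The plan is to apply Malyutin's theorem (Theorem \ref{Mal}) to the minimal action $(\T,G)$ and exploit the resulting dichotomy: either the action is equicontinuous, or it is a finite-to-one group extension of an extremely proximal action of $G$ on $\T$. These two alternatives will map cleanly onto the two alternatives of the Ghys--Margulis statement.

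In the extremely-proximal case the factor action of $G$ on $\T$ has far more than two points, so Proposition \ref{pingpong} applies directly: it produces elements $s,t\in G$ satisfying the ping-pong conditions, and hence $G$ contains a free subgroup on two generators, delivering the second alternative of the theorem at once.

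In the equicontinuous case the goal is to show that $G$ is (up to conjugation in $\Homeo(\T)$) a dense subgroup of $\T$ acting by translations. By Arzel\`a--Ascoli the equicontinuous family $G\subset\Homeo(\T)$ has compact closure $K$ in the topology of uniform convergence, and $K$ is then a compact topological subgroup of $\Homeo(\T)$. Since $G$ acts minimally on $\T$, so does $K$, and any compact group acting minimally on a compact Hausdorff space acts transitively; hence $K$ is transitive on $\T$. By the classical theorem of Ker\'ekj\'art\'o, every compact subgroup of $\Homeo(\T)$ is topologically conjugate to a subgroup of $O(2)$, and a transitive closed subgroup of $O(2)$ must contain $SO(2) = \T$. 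In the orientation-preserving setting (which is the standard habitat of the Ghys--Margulis alternative) this forces $K = \T$, so that $G\subset K$ is realized as a countable---and, by minimality, dense---subgroup of $\T$ acting by translations, as required.

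The main obstacle is the equicontinuous case, specifically the step identifying the compact group closure $K$ with $\T$ itself. A self-contained substitute for Ker\'ekj\'art\'o's theorem is to average an arbitrary compatible metric on $\T$ against Haar measure on $K$, producing a $K$-invariant compatible metric on $\T$; one then checks that the orientation-preserving isometry group of any such metric on the topological circle is a copy of $\T$. Apart from this identification, the whole argument reduces to a direct combination of Malyutin's theorem with Proposition \ref{pingpong}.
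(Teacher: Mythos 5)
Your proposal is correct and follows essentially the same route as the paper, which simply combines Theorem \ref{Mal} with Proposition \ref{pingpong} and treats the equicontinuous case as immediate ("equicontinuous and acts on $\T$ by translations"). The extra work you do in the equicontinuous case --- taking the compact closure $K$, averaging a metric over Haar measure, and identifying the isometry group --- fills in a step the paper leaves unjustified, and your parenthetical caveat about orientation is well taken: without an orientation-preserving hypothesis the first alternative as literally stated can fail (e.g.\ an infinite dihedral group generated by an irrational rotation and a reflection acts minimally and equicontinuously but is not isomorphic to a subgroup of $\T$), so flagging that restriction is a genuine improvement rather than a gap.
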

 
\begin{proof}
%We first recall a simple well known result (\cite{Gl} Proposition 3.5): If $G$ admits a
%nontrivial minimal extremely proximal action then $G$ contains a free group on two generators.
According to Theorem \ref{Mal} the resulting dynamical system $(\T,G)$ is either equicontinuous 
and acts on $\T$ by translations, or it admits a nontrivial factor, via a covering map,
which is a minimal extremely proximal $G$-action on $\T$.
In the latter case we apply Proposition \ref{pingpong} to conclude that $G$ contains
a free group on two generators.
\end{proof}

It is easy to see that an extremely proximal system can not admit an invariant probability measure.
This observation, together with Theorem \ref{GM}, yield the following corollary.
%which is the more usual version in which the Ghys-Margulis alternative is expressed.

\begin{cor}
A minimal system $(\T,G)$ either admits an invariant probability measure 
or it is a finite to one extension of an extremely proximal action
of $G$ on $\T$, 
where the factor map is a group extension, hence a covering map.
\end{cor}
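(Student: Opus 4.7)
The plan is to derive the corollary directly from Malyutin's theorem (Theorem \ref{Mal}) combined with the elementary fact that every equicontinuous action on a compact metric space admits an invariant Borel probability measure. Applying Theorem \ref{Mal} to the minimal system $(\T,G)$ yields the dichotomy: either the action is equicontinuous, or it is a finite to one extension of an extremely proximal action of $G$ on $\T$ via a covering map that is a group extension.

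In the second case there is nothing left to prove, since this is precisely the second alternative in the statement of the corollary.

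In the first case I would produce the required $G$-invariant probability measure as follows. Let $K$ denote the closure of the image of $G$ in $\Homeo(\T)$ in the topology of uniform convergence. Equicontinuity of the action implies that $K$ is a compact topological group acting continuously on $\T$, and so it carries a Haar probability measure $\mu_K$. Pushing $\mu_K$ forward along the orbit map $k \mapsto k x_0$ at any basepoint $x_0 \in \T$ produces a Borel probability measure on $\T$ that is invariant under $K$, and in particular under $G$. Alternatively, one may invoke Theorem \ref{GM} (or Step 1 of the proof of Theorem \ref{Mal}) to see that the equicontinuous action is by translations on $\T$, in which case Lebesgue/Haar measure on $\T$ is manifestly $G$-invariant.

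The only mild obstacle is producing the invariant measure in the equicontinuous case, since $G$ itself need not be amenable; the trick is to pass to its compact closure in $\Homeo(\T)$, where Haar measure is automatic. The observation preceding the corollary, namely that extremely proximal systems admit no invariant probability measure, moreover shows that the two alternatives are mutually exclusive, though this is not needed for the \emph{either/or} claim itself.
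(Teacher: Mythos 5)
Your proof is correct and takes essentially the same route as the paper: Theorem \ref{Mal} supplies the dichotomy, and the equicontinuous case is resolved by exhibiting an invariant measure --- the paper does this implicitly via Theorem \ref{GM} (the action is by translations, so Lebesgue measure works), while you also give the slightly more general argument of pushing forward Haar measure from the compact closure of $G$ in $\Homeo(\T)$. Your closing remark on mutual exclusivity corresponds exactly to the paper's observation, stated just before the corollary, that an extremely proximal system admits no invariant probability measure.
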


\br
   
When the minimality assumption is dropped we have the following classification of 
$G$-actions on $\T$ (see \cite[Theorem 3.7]{Bek}).

\begin{thm}
Let $G \subset \Homeo(\T)$ be a group. 
Then there is a nonempty $G$-minimal subset of $\T$
and at least one of the following mutually exclusive assertions holds:
\begin{enumerate}
\item
Every minimal set is finite.
\item
There is a unique minimal set $M \subset \T$, which is perfect and nowhere dense.
\item 
The minimal set coincides with $\T$.
\end{enumerate}
\end{thm}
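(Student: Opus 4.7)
The plan is in three parts. First, I would produce a minimal set by applying Zorn's lemma to the collection of nonempty closed $G$-invariant subsets of $\T$ ordered by reverse inclusion; compactness of $\T$ ensures any descending chain of such sets has nonempty intersection, so Zorn yields a minimal element, which is exactly a minimal set. Second, I would classify any minimal set $M \subset \T$ topologically: the set of isolated points of $M$ is $G$-invariant, so by minimality either every point of $M$ is isolated (and then $M$, being compact discrete, is finite) or $M$ is perfect. In the perfect case $\mathrm{int}_\T M$ is $G$-invariant open in $\T$ and $\partial M = M \setminus \mathrm{int}_\T M$ is closed and $G$-invariant, so minimality forces either $\partial M = \emptyset$, making $M$ clopen and hence $M = \T$ by connectedness, or $\mathrm{int}_\T M = \emptyset$, so $M$ is perfect and nowhere dense. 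These are the three topological types in the statement.

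Third, for mutual exclusivity I would verify uniqueness of the minimal set in cases (2) and (3). Case (3) is immediate: any second minimal set would be disjoint from $\T$, which is absurd. For case (2), suppose $M_0$ is a perfect nowhere dense minimal set and $M_1$ is another minimal set. Distinct minimal sets are disjoint, so $M_1 \subset \T \setminus M_0 = \bigsqcup_i J_i$, a countable disjoint union of open arcs, and by compactness of $M_1$ there is a finite subcollection with $M_1 \subset J_{i_1} \cup \cdots \cup J_{i_k}$. Fixing $x \in M_1 \cap J_{i_1}$, for each $g \in G$ the point $gx$ lies in $M_1$, hence in some $J_{i_j}$, while it also lies in the complementary arc $gJ_{i_1}$; since the complementary arcs of $M_0$ are pairwise disjoint, this forces $gJ_{i_1} = J_{i_j}$. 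Thus the $G$-orbit of the arc $J_{i_1}$ is contained in the finite set $\{J_{i_1}, \ldots, J_{i_k}\}$, and the union of the endpoints of the arcs in this orbit is a nonempty finite $G$-invariant subset of $M_0$, contradicting minimality of the infinite perfect set $M_0$.

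Together with the topological trichotomy, this establishes that the three alternatives are exhaustive and mutually exclusive, and that the unique minimal set in case (2) is indeed perfect and nowhere dense. I expect the main obstacle to be precisely the uniqueness argument in case (2); the decisive observation is that a hypothetical competing minimal set would force some complementary arc of $M_0$ to have a finite $G$-orbit, yielding a finite $G$-invariant subset of $M_0$ incompatible with $M_0$ being perfect.
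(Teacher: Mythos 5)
Your proof is correct, and it fills in the parts the paper dismisses as easy (existence via Zorn, and the topological trichotomy via the derived set and the boundary $M\setminus \mathrm{int}\,M$ — one small nitpick there: minimality applies to \emph{closed} invariant sets, so the clean way to get the isolated/perfect dichotomy is to observe that the derived set $M'$ is closed and invariant, rather than that the set of isolated points is invariant; the conclusion is the same). Where you genuinely diverge from the paper is the uniqueness step in case (2). The paper picks a maximal complementary arc $A$ of the Cantor minimal set $M$, notes that its $G$-translates are pairwise disjoint complementary arcs so that only finitely many have length $\ge \ep$, and concludes that $A$ is \emph{contractible}; a competing minimal set sitting inside a complementary arc would then be contracted onto a point of $M$, contradicting disjointness. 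You avoid contractibility and limits altogether: a competing minimal set $M_1$, being covered by finitely many complementary arcs, forces the $G$-orbit of one such arc to be finite, and the endpoints of that finite orbit form a finite invariant subset of the perfect set $M_0$ — a contradiction. Your argument is more elementary and combinatorial; note, though, that the finite-orbit-of-an-arc observation you use is exactly what the paper needs implicitly to know that the orbit of $A$ is infinite (hence that $A$ really contracts), and the paper's route has the added benefit of producing a contractible set, which is the recurring tool of the whole article. Both proofs are complete and correct.
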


\begin{proof}
Suppose that $M \subset \T$ is a minimal Cantor set. Pick any maximal open arc $A$ is
the complement of $M$. Then for any $\ep >0$  the countable collection $\{g(A) : g \in G\}$ 
contains only finitely many arcs of length $\ge \ep$. Thus $A$ is a contractible set.
This proves the uniqueness of $M$. The rest of the claims of the theorem follow easily.
\end{proof}

In the second case it is easy to see that, by collapsing the closure
of each maximal open arc in the complement of the minimal set $M$
to a single point, one obtains a factor map $\pi : (\T,G) \to (Y,G)$, where
(i) $Y$ is homeomorphic to $\T$, 
(ii) $\pi \rest M : M \to Y$ is at most two-to-one,
with $\card (\pi\rest M)^{-1}(y) = 2$ iff $\pi^{-1}(y)$ is a closed arc as above. 
Thus, in particular, the set $\{y \in Y :  \card (\pi\rest M)^{-1}(y) = 2\}$ is countable. 

\br

Using these facts we deduce the following theorem, \cite[Corollary 2]{Mal}.

\begin{thm}
For any $G$-dynamical system $(\T,G)$ exactly one of the following alternatives holds.
\begin{enumerate}
\item
There is a finite orbit.
\item
There is a factor map $\pi : (\T,G) \to (Y,G)$ onto a system $(Y,G)$
where $Y \cong \T$ and the $G$-action on $Y$ is minimal and equicontinuous.
\item
There is a factor map $\pi : (\T,G) \to (Y,G)$ onto a system $(Y,G)$
where $Y \cong \T$ and the $G$-action on $Y$ is minimal and extremely proximal.
\end{enumerate}
\end{thm}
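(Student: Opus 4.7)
The plan is to combine the preceding three-case classification of $G$-actions on $\T$ with Malyutin's theorem (Theorem~\ref{Mal}) applied to the appropriate minimal circle factor in each case.

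If every minimal set is finite, a minimal orbit itself provides alternative~(1). If $(\T,G)$ is already minimal on $\T$, Malyutin's theorem applies directly: equicontinuity yields~(2) with $\pi$ the identity map, and a finite-to-one covering extension of an extremely proximal circle action yields~(3). In the remaining case of a unique minimal Cantor set $M$, invoke the construction described just before the theorem: collapsing the closure of each maximal complementary arc of $M$ produces a factor map $\pi_M : (\T,G) \to (Y_M,G)$ with $Y_M \cong \T$, and $(Y_M,G)$ is minimal because $\pi_M(M) = Y_M$ is the continuous image of the minimal set $M$. Applying Malyutin to $(Y_M,G)$ places us either in~(2) (if $(Y_M,G)$ is equicontinuous) or in~(3) (composing $\pi_M$ with Malyutin's further covering onto an extremely proximal circle action).

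For mutual exclusivity, a finite orbit projects to a finite orbit in any factor, so~(1) is incompatible with the infinite minimal circle factors demanded by~(2) and~(3). The incompatibility of~(2) with~(3) rests on the observation recorded just before the corollary: an equicontinuous minimal action on $\T$ carries the $G$-invariant Haar measure on $Y$, while a nontrivial extremely proximal system admits no invariant probability measure. If both factor maps existed simultaneously, one would take a minimal subsystem $Z_0$ of the joint factor $\overline{(\pi_2,\pi_3)(\T)} \subset Y_2 \times Y_3$, obtain an invariant probability measure on $Z_0$ using the equicontinuous side, and push it forward along $Z_0 \to Y_3$ to produce the forbidden invariant measure on $Y_3$.

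The main obstacle is precisely this non-coexistence of~(2) and~(3): constructing the invariant probability measure on the joint factor requires a lifting argument from the equicontinuous factor (essentially using that equicontinuous extensions of amenable systems are amenable, or the concrete rotational structure of $(Y_2,G)$) and is the only step not immediate from Malyutin's theorem and the preceding classification. Once that lifting is pinned down, the three alternatives of the theorem fall out as a formal bookkeeping of the two exclusive dichotomies (the three-case classification of the minimal set, and Malyutin's equicontinuous-versus-extremely-proximal dichotomy on the minimal circle factor).
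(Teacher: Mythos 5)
Your derivation of the three alternatives themselves is correct and is exactly the route the paper intends (the paper offers no written argument beyond ``using these facts we deduce''): finite minimal sets give (1); if $\T$ itself is minimal, Theorem \ref{Mal} gives (2) or (3) directly; and in the Cantor case the collapsing map $\pi_M$ produces a minimal circle factor $(Y_M,G)$ to which Theorem \ref{Mal} applies, with (3) obtained by composing $\pi_M$ with the further covering. The exclusivity of (1) against (2) and (3) is also fine.

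The genuine gap is in your argument that (2) and (3) cannot coexist. You propose to put an invariant probability measure on a minimal subsystem $Z_0$ of the joint factor inside $Y_2\times Y_3$ ``using the equicontinuous side'' and then push it forward to $Y_3$. That step fails: the extension $Z_0\to Y_2$ is not an equicontinuous (isometric) extension, and a minimal system with a nontrivial equicontinuous factor need not carry any invariant measure. Concretely, if $Y_2$ is a nontrivial minimal rotation and $Y_3$ a nontrivial minimal extremely proximal system, then $Y_2\times Y_3$ is itself minimal (a minimal proximal flow is disjoint from every minimal distal flow), has $Y_2$ as an equicontinuous factor, and admits no invariant measure since $Y_3$ admits none; so no abstract lifting of the kind you invoke can exist, and the exclusivity must use the circle structure. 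A correct argument runs as follows. Both factor maps restrict to factor maps on the unique minimal set $M$, and the fibres of $M\to Y_M$ are proximal pairs (the extension is almost one-to-one), while a map onto the distal system $Y_2$ must collapse proximal pairs; hence $M\to Y_2$ factors through $Y_M$. Now if $Y_M$ is not equicontinuous, Steps 2--4 of the proof of Theorem \ref{Mal} show each proximal cell of $Y_M$ contains a nonempty open arc; the map $Y_M\to Y_2$ collapses these arcs, finitely many translates of one such arc cover $Y_M$ by minimality, so $Y_2$ is finite, hence (being a connected quotient of $\T$) a point --- ruling out (2). If instead $Y_M$ is equicontinuous, its Haar measure lifts to an invariant measure on $M$ because the set of doubled fibres of $M\to Y_M$ is countable, hence Haar-null; pushing this measure forward along the factor map of alternative (3) would give an invariant measure on a nontrivial extremely proximal system, which is impossible --- ruling out (3).
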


In turn, this leads to the following, more usual, formulation of the Ghys-Margulis alternative
\cite{Mar}.

\begin{thm}
A subgroup of $\Homeo(\T)$ which does not contain a free subgroup on two generators
must admit an invariant probability measure. 
\end{thm}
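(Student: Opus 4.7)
The plan is to deduce this theorem as a short corollary of the trichotomy theorem just proved together with Proposition \ref{pingpong}. That trichotomy partitions every $G$-action on $\T$ into three mutually exclusive alternatives: (1) there is a finite orbit, (2) there is a factor map onto a minimal equicontinuous action on a circle $Y \cong \T$, or (3) there is a factor map onto a minimal extremely proximal action on such a $Y$. An extremely proximal action on a circle is certainly nontrivial, so in case (3) Proposition \ref{pingpong} would force $G$ to contain a free group on two generators, which our hypothesis forbids. Hence only cases (1) and (2) can occur, and in each I would exhibit an invariant probability measure.

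In case (1) the uniform probability measure supported on the finite orbit is $G$-invariant by inspection. In case (2), a minimal equicontinuous action on $\T$ is conjugate to an action by rotations, so the image of $G$ in $\Homeo(Y)$ lies in the rotation group of $Y$; consequently Haar (Lebesgue) measure $\nu$ on $Y$ is $G$-invariant. It then remains to lift $\nu$ to an invariant probability measure $\mu$ on $\T$.

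To execute the lift I would exploit the explicit structure of the factor map recalled immediately before the trichotomy: $\pi : \T \to Y$ collapses each maximal open arc in the complement of the minimal set $M \subset \T$ to a point, and $\pi \rest M$ is at most two-to-one with the set of $y$ over which it is two-to-one being countable. Since $\nu$ is non-atomic, this countable set is $\nu$-null, and $\pi \rest M$ is essentially a measurable bijection onto $Y$. I would then define $\mu(A) := \nu(\pi(A \cap M))$, obtaining a Borel probability measure on $\T$ supported on $M$; its $G$-invariance would follow from $GM = M$, the equivariance $\pi \circ g = g \circ \pi$, and the $G$-invariance of $\nu$.

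The main obstacle is localised in this lifting step: one must verify that $\pi(A \cap M)$ is Borel for every Borel $A \subset \T$ and that $\mu$ defined by pullback is countably additive, which is a routine measurability check exploiting that $\pi$ is continuous and $\pi \rest M$ is injective off a countable set. Everything else in the argument is essentially a formal consequence of the preceding trichotomy theorem and of Proposition \ref{pingpong}, so the proof should come out in just a few lines once the pullback construction is in place.
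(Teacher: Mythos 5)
Your argument is correct and is precisely the deduction the paper intends: the paper states this theorem without proof, as an immediate consequence of the preceding trichotomy together with Proposition \ref{pingpong}, which is exactly your route. The one point requiring care --- lifting Haar measure from $Y$ back to $\T$ through the at-most-two-to-one map $\pi \rest M$ --- is handled correctly, since the set of doubly covered points is countable and hence null for the non-atomic measure $\nu$, so your pullback $\mu(A) = \nu(\pi(A \cap M))$ is indeed a well-defined $G$-invariant Borel probability measure.
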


%
%We refer the reader to \cite{Mar}, \cite{Gh}, \cite{Bek} and \cite{Mal} for more details.

%\bibliographystyle{amsplain}

\end{document}